\documentclass[onecolumn]{IEEEtran}
\usepackage{amsmath,amsfonts,amssymb} 
\usepackage[utf8]{inputenc}
\usepackage{bbm}
\usepackage{amsmath}
\usepackage{amssymb}
\usepackage{titlesec}
\usepackage{array}
\usepackage{textcomp}
\usepackage{stfloats}
\usepackage{url}
\usepackage{verbatim}
\usepackage{graphicx}
\usepackage{cite}
\usepackage{amsthm}%
\usepackage{mathrsfs}%
\usepackage{xcolor}%
\usepackage{textcomp}%
\usepackage{manyfoot}
\usepackage{booktabs}%
\usepackage{syntax}
\usepackage{fancybox}
\usepackage{adjustbox}
\usepackage{listings}
\usepackage{multirow}
\usepackage{multicol}
\usepackage{comment}
\usepackage{hyperref}
\usepackage{nameref}
\usepackage{mathtools}
\usepackage{tikz-cd}
\usepackage{float}
\usepackage{titlesec}
\titleformat{\subparagraph}[runin]{\normalfont\normalsize\bfseries}{\thesubparagraph}{1em}{}
\usepackage{tocloft}
\newtheorem{theorem}{Theorem}%
\newtheorem{remark}{Remark}%

\begin{document}

\title{Compact Quantum Group Extensions of $USp_q(2n)$, $O_q(n)$, and $SO_q(2n)$}
\author{ Manabendra Giri\\Indian Institute of Technology Bombay, Powai, Mumbai\\ \texttt{manab@math.iitb.ac.in}
}

\IEEEpubid{0000--0000~\copyright~2023 IEEE}

\maketitle
\begin{abstract}
    I introduce compact quantum group extensions associated with the $q$-deformations of the classical compact groups $USp(2n)$, $O(n,\mathbb{R})$, and $SO(2n,\mathbb{R})$. Motivated by the relationship between $SU_q(n)$ and $U_q(n)$, I study the problem of constructing compact quantum groups $Z_{q,n}$ extending the standard compact quantum groups $A_{q,n}\in\{ {USp_q(2n), O_q(N), SO_q(2n)}\}$ through an additional central unitary element.
\end{abstract}

\begin{IEEEkeywords}
Compact quantum group and its subgroups, $q$-deformations of function algebras.
\end{IEEEkeywords}

\tableofcontents

\section{Introduction}\label{sec1}
In theory of compact quantum group (CQG) developed by Woronowicz, the most accessible and widely studied examples in the literature are $SU_q(n)$ for $n>1,~q > 0$ and $q\neq 1$. Given the compact quantum group $SU_q(n)$, one has the compact quantum group $U_q(n)$, along with quantum group homomorphisms \begin{align*}
    \varrho_{q,n}:C(U_q(n))\to C(SU_q(n))& \text{ and }\vartheta_{q,n}:C(SU_q(n+1))\to C(U_q(n)),
\end{align*} such that $SU_q(n)$ is the quantum subgroup of $U_q(n)$, and $U_q(n)$ is quantum subgroup of $SU_q(n+1)$. Let $\varphi_{q,n}=\varrho_{q,n}\circ \vartheta_{q,n}$ and $\widetilde{\varphi}_{q,n}=\vartheta_{q,n}\circ\varrho_{q,n+1}$. I can therefore rephrase the situation as follows:
Given  compact quantum groups $SU_q(n)$ along with quantum subgroup maps $\varphi_{q,n}$, there exist compact quantum groups $U_q(n)$ along with quantum subgroup maps $\varrho_{q,n}$ and $\widetilde{\varphi}_{q,n}$, such that $\varphi_{q,n}\circ\varrho_{q,n+1}=\varrho_{q,n}\circ\widetilde{\varphi}_{q,n}$. I also have the compact quantum groups $USp_q(2n),~O_q(2n),~O_q(2n+1),~SO_q(2n)$ and $SO_q(2n+1)$ (denoted collectively as $A_{q,n}$) together with the quantum subgroup maps $\varphi_{q,n}:C(A_{q,n+1})\to C(A_{q,n}) $. One may ask whether there exist compact quantum groups $Z_{q,n}$, along with quantum subgroup maps \begin{align*}\varrho_{q,n}:C(Z_{q,n})\to C(A_{q,n})&\text{ and }\widetilde{\varphi}_{q,n}:C(Z_{q,n+1})\to C(Z_{q,n}),\end{align*} such that the following compatibility condition holds: \begin{align*}\varphi_{q,n}\circ\varrho_{q,n+1}=\varrho_{q,n}\circ\widetilde{\varphi}_{q,n}.\end{align*}

Actually, I want $C(Z_{q,n})$ to be such that the following relations in $C(A_{q,n})$:
\begin{align*}
    \sum_k V_{i,k}S_{k,j}=\delta_{i,j}
    \qquad \text{and} \qquad
    \sum_k S_{i,k}V_{k,j}=\delta_{i,j}
\end{align*}
are replaced by
\begin{align*}
    \sum_k V_{i,k}S_{k,j}=\delta_{i,j}\mathscr{Q}
    \qquad \text{and} \qquad
    \sum_k S_{i,k}V_{k,j}=\delta_{i,j}\mathscr{Q},
\end{align*}
for some unitary element $\mathscr{Q}$ satisfying
\[
\varrho_{q,n}(\mathscr{Q})=1.
\]

I first describe classical analogs $\widetilde{USp}(2n)$, $\widetilde{O}(N,\mathbb{R})$, and $\widetilde{SO}(2n,\mathbb{R})$, obtained by relaxing the defining relations up to multiplication by a unit scalar. 
I then construct their $q$-deformations using the Faddeev--Reshetikhin--Takhtajan framework associated with suitable $R$-matrices of types  $C$ and $D$. 
For each case, I define the corresponding bialgebras, look at the central group-like element $\mathscr{Q}_q$, and construct the antipode and involution explicitly.
This yields Hopf-$*$-algebra structures whose $C^*$-completions define compact matrix quantum groups.

In particular, I obtain the compact quantum groups $\widetilde{USp}_q(2n)$, $\widetilde{O}_q(N)$, and $\widetilde{SO}_q(2n)$ together with natural quantum subgroup maps onto $USp_q(2n)$, $O_q(N)$, and $SO_q(2n)$, respectively. I also prove a general structural theorem for universal $C^*$-algebra extensions generated by a central unitary element. As a consequence, every irreducible representation of the extended quantum groups is obtained from an irreducible representation of the corresponding standard compact quantum group by twisting with a one-dimensional unitary element. This provides a complete description of the irreducible representation theory of these extensions in terms of the known representation theory of the underlying compact quantum groups.
\section{Groups  $\widetilde{USp}(2n)$, $\widetilde{O}(2n+1)$, $\widetilde{O}(2n)$ and $\widetilde{SO}(2n)$}
 I can define a map $\Xi_n:U(n)\longrightarrow SU(n+1)$ by $A\overset{\Xi_n}{\longmapsto}\begin{bmatrix}
     A& 0\\0&det(A)^{-1}\end{bmatrix}$. Then $\Xi_n(U(n))$ is a compact group. Therefore one can use the term $C(U(n))$ to mean all continuous functions on $\Xi_n(U(n))$. Therefore $C(U(n))$ is a compact quantum group. 

Here first I want to define the matrix group $\widetilde{USp}(2n)$, $\widetilde{O}(2n+1)$, $\widetilde{O}(2n)$ and $\widetilde{SO}(2n)$.

Consider two maps $\Upsilon_{n}:GL(2n,\mathbb{C})\longrightarrow GL(2n,\mathbb{C})$ and $\wp_n:GL(n,\mathbb{C})\longrightarrow GL(n,\mathbb{C})$ such that
$\Upsilon_{n}(M)=S_{2n}MS_{2n}^{-1}$. and $\wp_n(M)=(\sqrt{D}_{n,-}) Q_{n}^{tr}MQ_{n}(\sqrt{D}_{n,+})$ where $S_{2n},~Q_{n},~\sqrt{D}_{n,+}$ and $\sqrt{D}_{n,-}$ are given below.

Let $diag(a_{1,1},a_{2,2},\cdots,a_{n,n})$ denote the diagonal matrix of order $n$ and $cdiag(a_{1,n},a_{2,n-1},\cdots,a_{n-1,2},a_{n,1})$ denote the cross diagonal matrix of order $n$. Let $C_n=cdiag(1,1,\cdots,1)$ be a cross diagonal matrix of order $n$. 
Consider the matrices

\begin{tabular}{ccc}
   $J_{2n}=\begin{bmatrix}O&I_n\\-I_n&0\end{bmatrix}$ ,  &$K_{2n}=\begin{bmatrix}O&C_n\\-C_n&0\end{bmatrix}$,& $S_{2n}=\begin{bmatrix}I_n&0\\0&C_n\end{bmatrix}$,\\ \\
    
     $Q_{2n}=\frac{1}{\sqrt{2}}\begin{bmatrix}I_n&I_n\\C_n&-C_n\end{bmatrix}$, & $\sqrt{D}_{2n,+}=\begin{bmatrix}I_n&0\\0&iI_n\end{bmatrix}$, & $\sqrt{D}_{2n,-}=\begin{bmatrix}I_n&0\\0&-iI_n\end{bmatrix}$, \\\\
     
     $Q_{2n+1}=\frac{1}{\sqrt{2}}\begin{bmatrix}I_n&0&I_n\\0&1&0\\C_n&0&-C_n\end{bmatrix}$, &$\sqrt{D}_{2n+1,+}=\begin{bmatrix}I_{n+1}&0\\0&iI_n\end{bmatrix}$, & $\sqrt{D}_{2n+1,-}=\begin{bmatrix}I_{n+1}&0\\0&-iI_n\end{bmatrix}$.
\end{tabular}

I know that compact symplectic group $USp(2n)$ is $\{M\in U(2n)~:~ MJ_{2n} M^{t}=J_{2n}\}$.  Here  I use the term $C(USp(2n))$ to mean all continuous functions on $\Xi_{2n}\circ\Upsilon_{n}(USp(2n))$.

  Here  I use the term $C(O(n,\mathbb{R}))$ ( $C(SO(n,\mathbb{R}))$  ) to mean all continuous functions on $\Xi_n\circ\wp_n(O(n,\mathbb{R}))$ ($\Xi_n\circ\wp_n(SO(n,\mathbb{R}))$ respectively ).

 Therefore I have the followings
\begin{align}
    C(USp(2n))&=\text{ continuous functions on }\Xi_{2n}(\{M\in~U(2n)~:~MK_{2n} M^{t}K_{2n}^t=I_{2n}\})\\
    C(O(n,\mathbb{R}))&=\text{ continuous functions on }\Xi_{n}(\{M\in~U(n)~:~MC_{n} M^{t}C_{n}^t=I_{n}\})\\
    C(SO(n,\mathbb{R}))&=\text{ continuous functions on }\Xi_{n}(\{M\in~U(2n)~:~MC_{n} M^{t}C_{n}^t=I_{n},~det(M)=1\})
\end{align}

Therefore I define the groups
\begin{align}
    \widetilde{USp}(2n)&=\{M\in U(2n)~:~MJ_{2n}M^t=\lambda J_{2n}~\&~|\lambda |=1 \}\\&=\Upsilon_n^{-1}(\{M\in~U(2n)~:~MK_{2n} M^{t}K_{2n}^t=\lambda I_{2n}~\&~|\lambda |=1\})\nonumber\\~~\nonumber\\
    \widetilde{O}(n,\mathbb{R})&=\wp_n^{-1}(\{M\in~U(n)~:~MC_{n} M^{t}C_{n}^t=\lambda I_{n}~\&~|\lambda |=1\})
\end{align}
Since for any $M\in \widetilde{O}(2n,\mathbb{R})$ I have $(\det M)^2=\lambda^{2n}$, there are two possible branches:
\[
\det(M)=\pm \lambda^n.
\]
I define $\widetilde{SO}(2n,\mathbb{R})$ using the positive branch, as the natural analogue of the special orthogonal group, i.e.,
\begin{align}
    \widetilde{SO}(2n,\mathbb{R})&=\wp_n^{-1}(\{M\in~U(2n)~:~MC_{2n} M^{t}C_{2n}^t=\lambda I_{2n},~det(M)=\lambda^n~~\&~|\lambda |=1\})
\end{align}

 I have there function algebras as follows:
\begin{align}
C(\widetilde{USp}(2n))&=\text{ continuous functions on }\Xi_{2n}(\{M\in~U(2n)~:~MK_{2n} M^{t}K_{2n}^t=\lambda I_{2n}~\&~|\lambda |=1\})\\
C(\widetilde{O}(n,\mathbb{R}))&=\text{ continuous functions on }\Xi_n(\{M\in~U(n)~:~MC_{n} M^{t}C_{n}^t=\lambda I_{n}~\&~|\lambda |=1\})\\
C(\widetilde{SO}(2n,\mathbb{R}))&=\text{ continuous functions on }\Xi_{2n}(\{M\in~U(2n)~:~MC_{2n} M^{t}C_{2n}^t=\lambda I_{2n},~det(M)=\lambda^n~~\&~|\lambda |=1\})
\end{align}

Let $q$ be a positive real number. Therefore It is known that $q$-deformations of $C(USp(2n))$, $C(O(n,\mathbb{R}))$ and $C(SO(n,\mathbb{R}))$ are studied in \cite{bookklisch97}. So in this paper I introduce the $q$-deformations of $C(\widetilde{USp}(2n))$, $C(\widetilde{O}(n,\mathbb{R}))$ and $C(\widetilde{SO}(2n,\mathbb{R}))$
\section{$q$-deformations of $C(\widetilde{USp}(2n))$, $C(\widetilde{O}(n,\mathbb{R}))$ and $C(\widetilde{SO}(n,\mathbb{R}))$}
It is well known that for a linear space $H$ with a basis $\{e_1,e_2,\cdots,e_n\}$ and a linear map $R:H\otimes H\longrightarrow H\otimes H$:
\begin{align*}
    R&:e_i\otimes e_j\to \sum_{k,l=1}^{n}R_{kl,ij}e_k\otimes e_l,\\
    \hat{R}=\text{ flip }\circ R&:,e_i\otimes e_j\to \sum_{k,l=1}^{n}R_{lk,ij}e_k\otimes e_l,
\end{align*}

there exists bialgebra $A_R$ (Faddeev-Reshetikhin-Takhtajan construction \cite{bookklisch97}) as follows.
$A_R$ is a bialgebra with unit, counit $\epsilon$ and coproduct $\Delta$ generated by the generators $V_{i,j}$ satisfying the relations:
\begin{align}
    \sum_{k,l=1}^{n}R_{ji,kl} V_{k,r}V_{l,s}&=\sum_{k,l=1}^{n}R_{lk,rs} V_{k,r}V_{l,s}\label{eq:R-commutation-relation}\\
    \Delta(V_{i,j})&=V_{i,k}\otimes V_{k,j}\nonumber\\
    \epsilon(V_{i,j})&=\delta_{i,j}\nonumber
\end{align}

Let $V=((V_{i,j}))\in M_{n}(A_R)$. Then one can assume $R,\hat{R}, V_1=V\otimes I_{n}, V_2=I_{n}\otimes V\in M_{n}(A_R)\otimes M_n(A_R)$ and from Eq-\ref{eq:R-commutation-relation}, I have \begin{align}
    RV_1V_2=V_2V_1R,~~~&\text{   or equivalently  }~~& \hat{R}V_1V_2=V_2V_1\hat{R}\label{eq:R-matrix-relation}
\end{align} 

First, I describe the situation for $SU_q(n)$ and $U_q(n)$. Then, I try to describe the corresponding situation for $USp_q(2n)$, $O_q(2n)$, $O_q(2n+1)$, and $SO_q(2n)$.
\subsection{Case 1: $SU_q(n)$ and $U_q(n)$}
Here I have \begin{align*}
    R_{ji,kl} = q^{\delta_{i,j}}\delta_{i,l}\delta_{j,k}+(q-\frac{1}{q})\delta_{i,k}\delta_{j,l}H(j-i)
\end{align*}
where $H$ is Heaviside symbol, that is $H(r)=1$ if $r>0$ and $H(r)=0$ if $r\leq 0$\label{Heaviside-symbol}.

Therefore I have $R_{ii,ii}=q$ for all $i$, $R_{ij,ij}=1$ for all $i\neq j$, $R_{ij,ji}=(q-\frac{1}{q})$ for all $i> j$ and $R_{ij,k,l}=0$ for other cases.

Using the left and right coactions of exterior algebra on $A_R$, I have the quantum determinant( see \cite{bookklisch97,Fiore94}) $\mathscr{D}_q\in A_R$ such that
\begin{align*}
    \mathscr{D}_q&=\sum_{\sigma\in\mathscr{S}_n} (-q)^{\ell(\sigma)}V_{{\sigma(1)},1}V_{{\sigma(2)},2}\cdots V_{{\sigma(n)},n}\nonumber\\&=\sum_{\sigma\in\mathscr{S}_n} (-q)^{\ell(\sigma)}V_{1,{\sigma(1)}}V_{2,{\sigma(2)}}\cdots V_{n,{\sigma(n)}}\nonumber\\
\end{align*}
and $\mathscr{D}_q$ commute with all $V_{i,j}$, $\Delta(\mathscr{D}_q)=\mathscr{D}_q\otimes \mathscr{D}_q$ and $\epsilon(\mathscr{D}_q)=1$. Now I can extend the bialgebra structure on $A_R[t]$ by defining $\Delta(t)=t\otimes t$ and $\epsilon(t)=1$. Then $\langle  \mathscr{D} - 1 \rangle$ and $\langle  t\mathscr{D} - 1\rangle$ are biideal of $A_R$ and $A_R[t]$ respectively. Therefore I have the bialgebra
$\mathbb{C}[SU_q(n)]=A_R/\langle  \mathscr{D} - 1 \rangle$ and  $\mathbb{C}[U_q(n)]=A_R[t]/\langle  t\mathscr{D} - 1\rangle$.
Consider the elements $s_{i,j}\in A_R$ such that
\begin{align*}
    s_{i,j}&= \sum_{\begin{matrix}\sigma\in\mathscr{S}_n\\\sigma(i)=j\end{matrix}} (-q)^{\ell_i(\sigma)} ~V_{\sigma(1),1}\cdots V_{\sigma(i-1),i-1}V_{\sigma(i+1),i+1}\cdots V_{\sigma(n),n}\\~\\
    &=\sum_{\begin{matrix}\sigma\in\mathscr{S}_n\\\sigma(j)=i\end{matrix}} (-q)^{\ell_j(\sigma)} ~V_{1,\sigma(1)}\cdots V_{j-1,\sigma(j-1)}V_{j+1,\sigma(j+1)}\cdots V_{n,\sigma(n)}\\
\end{align*}
where $\ell_i(\sigma)$ is the number of inversion of the bijection $\sigma|_{\{1,2,\cdots,i-1,i+1,\cdots,n\}}$.

Then I have $((V_{i,j}))\cdot ((s_{i,j}))=\mathscr{D}_qI_n$.  Using this relation and equation \eqref{eq:R-commutation-relation}, I have the antipode $S$ such that
\begin{align}
    \left.\begin{matrix}S(V_{i,j})&=&s_{i,j}\\S^2(V_{i,j})&=&q^{2(i-j)}V_{i,j}\end{matrix}\right\}&\text{ on } \mathbb{C}[SU_q(n)],\\~\\
    \left.\begin{matrix}S(V_{i,j})&=&ts_{i,j}\\S(t)&=&\mathscr{D}_q\\S^2(V_{i,j})&=&q^{2(i-j)}V_{i,j}\end{matrix}\right\}&\text{ on } \mathbb{C}[U_q(n)].
\end{align}
Then I can make them a Hopf-$*$-algebra by defining the involution such that $V_{i,j}^*=S(V_{j,i})$ and $~t^*=\mathscr{D}_q$.

The unitary corepresentation $((V_{i,j}))$ of $\mathbb{C}[SU_q(n)]$ and $((V_{i,j}))\oplus (\mathscr{D}_q^{-1})$ of $\mathbb{C}[U_q(n)]$ make them compact matrix quantum group algebra and their $C^*$-completion with bounded extension of $\Delta$ and $\epsilon$ make them compact quantum group.  For more details about $U_q(n)$, see \cite{koelink91}.

\subsection{Case 2: $\widetilde{USp}_q(2n)$ and ${USp}_q(2n)$}
Here consider $K_{q}=cdiag(q^{-\rho_1},q^{-\rho_2},\cdots,q^{-\rho_{n1}},-q^{-\rho_{n+1}},\cdots,-q^{\rho_{2n}})$ where $\rho_j=\left \{\begin{matrix}n+1-j&\text{ if }&j\leq n\\n-j&\text{ if }&n+1\leq j\leq 2n\end{matrix}\right.$ Therefore $K_{q}=cdiag(q^{-n},q^{-(n-1)},\cdots,q^{-1},-q^{1},\cdots,-q^{n})$. Also I have \begin{align*}
    R_{ij,mr} &= q^{\delta_{i,j}-\delta_{i,2n+1-j}}\delta_{i,m}\delta_{j,r}+\Big(q-\frac{1}{q}\Big)H(i-m)\Big[ \delta_{j,m}\delta_{i,r}+\Big(-1\Big)^{(\lfloor \frac{j-1}{n}\rfloor+\lfloor \frac{m-1}{n}\rfloor)}\delta_{j,2n+1-i}\delta_{m,2n+1-r}~~q^{-\rho_j+\rho_r}\Big]
\end{align*}
where $H$ is Heaviside symbol, that is $H(r)=1$ if $r>0$ and $H(r)=0$ if $r\leq 0$\ref{Heaviside-symbol}.

Using the left and right coactions of exterior algebra on $A_R$, I have the quantum determinant (see \cite{bookklisch97,Fiore94}) $\mathscr{D}_q\in A_R$ such that
\begin{align*}
 \mathscr{D}_{q}&=\sum_{\sigma\in\mathscr{S}_{2n}}(-q)^{\ell(\sigma)}q^{+r(\sigma)}
 t_{\sigma(1),1}t_{\sigma(2),2}\dots t_{\sigma(2n),2n}\\
    &=\sum_{\sigma\in\mathscr{S}_{2n}}(-q)^{\ell(\sigma)}q^{+r(\sigma)}
 t_{1,\sigma(1)}t_{2,\sigma(2)}\dots t_{2n,\sigma(2n)}
 \end{align*}
 where $\ell(\sigma)$ denotes the length of the permutation $\sigma$, i.e., the number of inversions of the permutation $\sigma$ and $r(\sigma)$ is the number of $i$ such that $j=\sigma^{-1}[2n+1-\sigma(i)]$ and $\sigma(j)>\sigma(i)$ for $1\leq i\leq n$.
Here $\mathscr{D}_q$ commute with all $V_{i,j}$, $\Delta(\mathscr{D}_q)=\mathscr{D}_q\otimes \mathscr{D}_q$ and $\epsilon(\mathscr{D}_q)=1$. But in this case I don't need quantum determinant.

Let $\mathcal{K}=\sum_{i,j=1}^{2n}\Big(-1\Big)^{(\lfloor \frac{i-1}{n}\rfloor+\lfloor \frac{j-1}{n}\rfloor)}q^{\rho_i-\rho_j}E_{2n+1-i,j}\otimes E_{i,2n+1-j}\in M_{2n}(A_R)\otimes M_{2n}(A_R)$. Therefore I have $\mathcal{K}=I_{2n}-(q-\frac{1}{q})^{-1}(\hat{R}-\hat{R}^{-1})$. So  \begin{align*}
    \mathcal{K}=\frac{1-(q-\frac{1}{q})^{-1}(q^{2n+1}-q^{-2n-1})}{(-q-q^{-2n-1})(q^{-1}-q^{-2n-1})}[\hat{R}^2-(q-\frac{1}{q})\hat{R}-I],
\end{align*}
polynomial of $\hat{R}$. From Eq-\ref{eq:R-matrix-relation}, I have 
\begin{align}
    \mathcal{K}V_1V_2=V_2V_1\mathcal{K}.\label{eq:K-commutation-relation-USp}
\end{align} 

Consider elements $s_{i,j}\in A_R$ such that
\begin{align*}
    s_{i,j}&= \Big(-1\Big)^{(\lfloor \frac{i-1}{n}\rfloor +\lfloor \frac{j-1}{n}\rfloor)}q^{\rho_j-\rho_i} V_{2n+1-i,2n+1-j}
\end{align*}

Here I have an element $\mathscr{Q}_q\in A_R$ such that
\begin{align*}
 \mathscr{Q}_{q}=\sum_{k=1}^{2n}V_{i,k}s_{k,i}=\sum_{k=1}^{2n}s_{i,k}V_{k,i}&\text{ for all }~i
 \end{align*}
 and $\mathscr{Q}_q$ commute with all $V_{i,j}$, $\Delta(\mathscr{Q}_q)=\mathscr{Q}_q\otimes \mathscr{Q}_q$ and $\epsilon(\mathscr{Q}_q)=1$. For details see \cite{bookklisch97}.

Now one can extend the bialgebra structure on $A_R[t]$ by defining $\Delta(t)=t\otimes t$ and $\epsilon(t)=1$. Then 
\begin{align*}
    I_1&=\langle  \sum_{k=1}^{2n} V_{i,k}s_{k,j} - \delta_{i,j},\sum_{k=1}^{2n} s_{i,k}V_{k,j} - \delta_{i,j} ~:i,j\in\{1,2,\cdots, 2n\}\rangle&\text{  and  }\\
    J_1&=\langle   t(\sum_{k=1}^{2n} V_{i,k}s_{k,j}) - \delta_{i,j},t(\sum_{k=1}^{2n} s_{i,k}V_{k,j}) - \delta_{i,j} ~:i,j\in\{1,2,\cdots, 2n\} \rangle
\end{align*} are biideal of $A_R$ and $A_R[t]$ respectively. Therefore I have the bialgebra
$\mathbb{C}[USp_q(2n)]=A_R/I_1$ and  $\mathbb{C}[\widetilde{USp}_q(2n)]=A_R[t]/J_1$.

Then I have $((V_{i,j}))\cdot ((s_{i,j}))=\mathscr{Q}_qI_n$.  Using this relation and equation \eqref{eq:K-commutation-relation-USp}, I have the antipode $S$ such that
\begin{align}
    \left.\begin{matrix}S(V_{i,j})&=&s_{i,j}\\S^2(V_{i,j})&=&q^{2(\rho_j-\rho_i)}V_{i,j}\end{matrix}\right\}&\text{ on } \mathbb{C}[USp_q(2n)],\\~\\
    \left.\begin{matrix}S(V_{i,j})&=&ts_{i,j}\\S(t)&=&\mathscr{Q}_q\\S^2(V_{i,j})&=&q^{2(\rho_j-\rho_i)}V_{i,j}\end{matrix}\right\}&\text{ on } \mathbb{C}[\widetilde{USp}_q(2n)].
\end{align}
Then I can make them a Hopf-$*$-algebra by defining the involution such that $V_{i,j}^*=S(V_{j,i})$ and $~t^*=\mathscr{Q}_q$.

The unitary corepresentation $((V_{i,j}))$ of $\mathbb{C}[USp_q(2n)]$ and $((V_{i,j}))\oplus (\mathscr{Q}_q^{-1})$ of $\mathbb{C}[\widetilde{USp}_q(2n)]$ make them compact matrix quantum group algebra and their $C^*$-completion with bounded extension of $\Delta$ and $\epsilon$ make them compact quantum group.

Here one has a $C^*$ morphism ${\varphi}_{q,n}:C({USp}_q(2n+2))\to C({USp}_q(2n))$ defined by
\begin{align*}
	{\varphi}_{q,n}(V_{i,j})=\left\{\begin{matrix}
		V_{i-1,j-1}&\text{ if }&2\leq i,j\leq 2n+1\\ \delta_{i,j}&&\text{ otherwise }
	\end{matrix}\right.
\end{align*}

Moreover, there exist $C^*$-morphisms $\varrho_{q,n}:C(\widetilde{USp}_q(2n))\to C({USp}_q(2n))$ and $\widetilde{\varphi}_{q,n}:C(\widetilde{USp}_q(2n+2))\to C(\widetilde{USp}_q(2n))$ satisfying
\begin{align*}
	\varrho_{q,n}(V_{i,j})&=V_{i,j},\\ \varrho_{q,n}(\mathscr{Q}_q)&=1,\\
	\widetilde{\varphi}_{q,n}(V_{i,j})&=\left\{\begin{matrix}
		V_{i-1,j-1}&\text{ if }&2\leq i,j\leq 2n+1\\ \delta_{i,j}&&\text{ otherwise }
	\end{matrix}\right.,\\ \widetilde{\varphi}_{q,n}(\mathscr{Q}_q)&=\mathscr{Q}_q.
\end{align*}

\subsection{Case 3: ${O}_q(N)$, ${SO}_q(N)$, $\widetilde{O}_q(N)$ and $\widetilde{SO}_q(2n)$}
For the even case $N=2n$, consider $C_{q}=cdiag(q^{-\rho_1},q^{-\rho_2},\cdots,q^{-\rho_{n}},q^{-\rho_{n+1}},\cdots,q^{-\rho_{2n}})$ where\newline $\rho_j=\left \{\begin{matrix}n-j&\text{ if }&j\leq n\\n+1-j&\text{ if }&n+1\leq j\leq 2n\end{matrix}\right.$ Therefore $C_{q}=cdiag(q^{-(n-1)},q^{-(n-2)},\cdots,1,1,\cdots,q^{n-1})$.

For the odd case $N=2n+1$, consider $C_{q}=cdiag(q^{-\rho_1},q^{-\rho_2},\cdots,q^{-\rho_{n}},q^{-\rho_{n+1}},q^{-\rho_{n+2}},\cdots,q^{-\rho_{2n+1}})$ where \newline $\rho_j=\left \{\begin{matrix}n+\frac{1}{2}-j&\text{ if }&1\leq j\leq n\\0&\text{ if }&j=n+1\\2n+\frac{3}{2}-j&\text{ if }&n+2\leq j\leq 2n+1\end{matrix}\right.$  Therefore $C_{q}=cdiag(q^{-(n-\frac{1}{2})},q^{-(n-\frac{3}{2})},\cdots,q^{-\frac{1}{2}},1,q^{\frac{1}{2}}\cdots,q^{n-\frac{1}{2}})$.

Here I have \begin{align*}
    R_{ij,mr} &= q^{\delta_{i,j}-\delta_{i,N+1-j}}\delta_{i,m}\delta_{j,r}+\Big(q-\frac{1}{q}\Big)H(i-m)\Big[ \delta_{j,m}\delta_{i,r}-\delta_{j,N+1-i}\delta_{m,N+1-r}~~q^{-\rho_j+\rho_r}\Big]
\end{align*}
where $H$ is Heaviside symbol, that is $H(r)=1$ if $r>0$ and $H(r)=0$ if $r\leq 0$\ref{Heaviside-symbol}.

Using the left and right coactions of exterior algebra on $A_R$, one has the quantum determinant $\mathscr{D}_q\in A_R$ given below.

For $N=2n$ I have the expression of the quantum determinant (See \cite{bookklisch97}, \cite{Fiore94})
\begin{align*}
 \mathscr{D}_{q}&=\sum_{\sigma\in\mathscr{S}_{2n}}(-q)^{\ell(\sigma)}q^{-r(\sigma)}
 t_{\sigma(1),1}t_{\sigma(2),2}\dots t_{\sigma(2n),2n}\\
    &=\sum_{\sigma\in\mathscr{S}_{2n}}(-q)^{\ell(\sigma)}q^{-r(\sigma)}
 t_{1,\sigma(1)}t_{2,\sigma(2)}\dots t_{2n,\sigma(2n)}
 \end{align*}
 where $\ell(\sigma)$ denotes the length of the permutation $\sigma$, i.e., the number of inversions of the permutation $\sigma$ and $r(\sigma)$ is the number of $i$ such that $j=\sigma^{-1}[2n+1-\sigma(i)]$ and $\sigma(j)>\sigma(i)$ for $1\leq i\leq n$.

 For $N=2n+1$, I first note meaning of some symbols.

Take  $\phi\neq Y=\{j_1<j_2<\cdots<j_k\}\subsetneqq\{1,2,\cdots,n\}$. Let $\{1,2,\cdots,n\}\setminus Y=\{j_{k+1}<j_{k+2}<\cdots<j_{n}\}$ and 
 $\sigma$ be rearrangement of $Y\cup \{2n+2-j_k<2n+2-j_{k-1}<\cdots<2n+2-j_1\}\cup \{n+1\}$ where $n+1$ occure $2n+1-2k$ times and oher elements occure once. Suppose $\sigma(i)$ be the $i-th$ positional value of the rearrangement. $\mathscr{S}_Y$ be collection of all such rearrangements. Then $\ell(\sigma)$ denotes the number of inversion of the rearrangement $\sigma$ and $r(\sigma)$ is the number of $i$ such that $1\leq i\leq n$ and $\sigma(i)\leq n$. $\ell_1(Y)$ is number of inversion of permutation $\begin{pmatrix}1 & 2 & 3 & \cdots & n-1 & n \\j_1 & j_2 & j_3 & \cdots &  j_{n-1}  & j_n\end{pmatrix}$ and $\ell_2(Y)$ is number of inversion of permutation $\begin{pmatrix}n+2 & n+3 & n+4 & \cdots & 2n & 2n+1 \\2n+2-j_n & 2n+2-j_{n-1} & 2n+2-j_{n-2} & \cdots &  2n+2-j_{2}  & 2n+2-j_1\end{pmatrix}$
 Then the expression for the quantum determinant is given by (see \cite{bookklisch97,Fiore94}):
 \begin{align*}\mathscr{D}_{q}&=\sum_{\sigma\in\mathscr{S}_{2n+1}}(-q)^{\ell(\sigma)}q^{-r(\sigma)}
 V_{1,\sigma(1)}V_{2,\sigma(2)}\dots V_{2n+1,\sigma(2n+1)}\\&+\sum_{\sigma\in\mathscr{S}_Y}(-q)^{\ell(\sigma)}q^{-r(\sigma)}(-q)^{\ell_1(Y)}(-q)^{\ell_2(Y)}(\sqrt{q}-\frac{1}{\sqrt{q}})^{n-k}q^{j_{k+1}+j_{k+2}+\cdots+j_{n}-n(n-k)}(-q)^{(n-k)(n-k)}(n-k)!
 \\&\cdot V_{1,\sigma(1)}V_{2,\sigma(2)}\dots V_{2n+1,\sigma(2n+1)}\end{align*}

In both cases, $\mathscr{D}_q$ commute with all $V_{i,j}$, $\Delta(\mathscr{D}_q)=\mathscr{D}_q\otimes \mathscr{D}_q$ and $\epsilon(\mathscr{D}_q)=1$. But I are not interested on $\mathscr{D}_q$ or odd case.

Let $\mathcal{K}=\sum_{i,j=1}^{N}q^{\rho_i-\rho_j}E_{N+1-i,j}\otimes E_{i,N+1-j}\in M_{N}(A_R)\otimes M_{N}(A_R)$. Therefore I have $\mathcal{K}=I_{2n}-(q-\frac{1}{q})^{-1}(\hat{R}-\hat{R}^{-1})$. So  \begin{align*}
    \mathcal{K}=\frac{1+(q-\frac{1}{q})^{-1}(q^{N-1}-q^{1-N})}{(q^{1-N}-q)(q^{-1}+q^{1-N})}[\hat{R}^2-(q-\frac{1}{q})\hat{R}-I],
\end{align*}
polynomial of $\hat{R}$. From Eq-\ref{eq:R-matrix-relation}, I have 
\begin{align}
    \mathcal{K}V_1V_2=V_2V_1\mathcal{K}.\label{eq:K-commutation-relation}
\end{align}

Consider elements $s_{i,j}\in A_R$ such that
\begin{align*}
    s_{i,j}&= q^{\rho_j-\rho_i} V_{N+1-i,N+1-j}
\end{align*}

Here I have an element $\mathscr{Q}_q\in A_R$ such that
\begin{align*}
 \mathscr{Q}_{q}=\sum_{k=1}^{N}V_{i,k}s_{k,i}=\sum_{k=1}^{N}s_{i,k}V_{k,i}&\text{ for all }~i
 \end{align*}
 and $\mathscr{Q}_q$ commute with all $V_{i,j}$, $\Delta(\mathscr{Q}_q)=\mathscr{Q}_q\otimes \mathscr{Q}_q$ and $\epsilon(\mathscr{Q}_q)=1$. For details see \cite{bookklisch97}.

Now one can extend the bialgebra structure on $A_R[t]$ by defining $\Delta(t)=t\otimes t$ and $\epsilon(t)=1$. Then 
\begin{align*}
    I_2&=\langle  (\sum_{k=1}^{N} V_{i,k}s_{k,j}) - \delta_{i,j},(\sum_{k=1}^{N} s_{i,k}V_{k,j}) - \delta_{i,j} ~:i,j\in\{1,2,\cdots, N\}\rangle,&\\
    I_3&=\langle  \mathscr{D}_q-1, (\sum_{k=1}^{N} V_{i,k}s_{k,j}) - \delta_{i,j},(\sum_{k=1}^{N} s_{i,k}V_{k,j}) - \delta_{i,j} ~:i,j\in\{1,2,\cdots, N\}\rangle,&\\
    J_2&=\langle   t(\sum_{k=1}^{N} V_{i,k}s_{k,j}) - \delta_{i,j},t(\sum_{k=1}^{N} s_{i,k}V_{k,j}) - \delta_{i,j} ~:i,j\in\{1,2,\cdots, N\} \rangle,&\\
    J_3&=\langle  t^n\mathscr{D}_q-1, t(\sum_{k=1}^{2n} V_{i,k}s_{k,j}) - \delta_{i,j},t(\sum_{k=1}^{2n} s_{i,k}V_{k,j}) - \delta_{i,j} ~:i,j\in\{1,2,\cdots, 2n\} \rangle
\end{align*} are biideal of $A_R$ and $A_R[t]$ respectively. Therefore I have the bialgebra
$\mathbb{C}[O_q(N)]=A_R/I_2$, $\mathbb{C}[SO_q(N)]=A_R/I_3$,  $\mathbb{C}[\widetilde{O}_q(N)]=A_R[t]/J_2$, and $\mathbb{C}[\widetilde{SO}_q(2n)]=A_R[t]/J_3$

Then I have $((V_{i,j}))\cdot ((s_{i,j}))=\mathscr{Q}_qI_n$.  Using this relation and equation \eqref{eq:K-commutation-relation}, I have the antipode $S$ such that
\begin{align}
    \left.\begin{matrix}S(V_{i,j})&=&s_{i,j}\\S^2(V_{i,j})&=&q^{2(\rho_j-\rho_i)}V_{i,j}\end{matrix}\right\}&\text{ on } \mathbb{C}[O_q(N)],\\~\\
    \left.\begin{matrix}S(V_{i,j})&=&ts_{i,j}\\S(t)&=&\mathscr{Q}_q\\S^2(V_{i,j})&=&q^{2(\rho_j-\rho_i)}V_{i,j}\end{matrix}\right\}&\text{ on } \mathbb{C}[\widetilde{O}_q(N)].
\end{align}

As two sided ideal $\langle\mathscr{D}_q-1\rangle$ is a Hopf ideal of $\mathbb{C}[O_q(N)]$, I have $\mathbb{C}[SO_q(N)]=\mathbb{C}[O_q(N)]/\langle\mathscr{D}_q-1\rangle$ is Hopf algebra. Similarly $\mathbb{C}[\widetilde{SO}_q(2n)]=\mathbb{C}[\widetilde{O}_q(2n)]/\langle  t^n\mathscr{D}_q-1\rangle$ is also a Hopf algebra.

Then I can make them a Hopf-$*$-algebra by defining the involution such that $V_{i,j}^*=S(V_{j,i})$and $~t^*=\mathscr{Q}_q$ (For $\mathbb{C}[O_q(N)]$ I have $\mathscr{D}_q^*=\mathscr{D}_q$).

The unitary corepresentation $((V_{i,j}))$ of $\mathbb{C}[O_q(N)]$ and $\mathbb{C}[SO_q(N)]$ and $((V_{i,j}))\oplus (\mathscr{Q}_q^{-1})$ of $\mathbb{C}[\widetilde{O}_q(N)]$ and $\mathbb{C}[\widetilde{SO}_q(2n)]$ make them compact matrix quantum group algebra and their $C^*$-completion with bounded extension of $\Delta$ and $\epsilon$ make them compact quantum group.

Similarly one has $C^*$ morphisms ${\varphi}_{q,n}:C({O}_q(N+2))\to C({O}_q(N))$ defined by
\begin{align*}
	{\varphi}_{q,n}(V_{i,j})=\left\{\begin{matrix}
		V_{i-1,j-1}&\text{ if }&2\leq i,j\leq N+1\\ \delta_{i,j}&&\text{ otherwise }
	\end{matrix}\right.
\end{align*} where $N=2n$ or $2n+1$

Moreover, there are corresponding morphisms $\varrho_{q,n}:C(\widetilde{O}_q(N))\to C({O}_q(N))$ and $\widetilde{\varphi}_{q,n}:C(\widetilde{O}_q(N+2))\to C(\widetilde{O}_q(N))$ satisfying
\begin{align*}
	\varrho_{q,n}(V_{i,j})&=V_{i,j},\\ \varrho_{q,n}(\mathscr{Q}_q)&=1,\\
	\widetilde{\varphi}_{q,n}(V_{i,j})&=\left\{\begin{matrix}
		V_{i-1,j-1}&\text{ if }&2\leq i,j\leq N+1\\ \delta_{i,j}&&\text{ otherwise }
	\end{matrix}\right.,\\ \widetilde{\varphi}_{q,n}(\mathscr{Q}_q)&=\mathscr{Q}_q
\end{align*} assuming $N=2n$ or $2n+1$.

A similar construction also holds for the $SO_q(2n)$ case.

\section{All $C^*$ irreducible representations}
Consider the set $T=\{t_{i,j}~:~1\leq i,j\leq n\}$. Let $Pol_k(T)$ be the non-commutative homogeneous polynomial of degree $k$. Consider the following sets of relations:
\begin{align*}
    R_1&=\{E_{1,i}~:~E_{1,i}\in Pol_{r_i}(T) \text{ and }1\leq i\leq p\}\\
    R_0&=\{s_{i,j}\in Pol_{k}(T)~:~1\leq i,j\leq n, \text{ for some fixed }k~\} \\
    R_3&=\{E_{3,i}-1~:~E_{3,i}\in Pol_{m_i(k+1)}(T) \text{ and }1\leq i\leq  c\}\\
    R_4&=\{\sum_{k=1}^nt_{i,k}s_{j,k}-\delta_{i,j},\sum_{k=1}^ns_{k,i}t_{k,j}-\delta_{i,j}~:~ s_{i,j}\in R_0\}\\
    S_0&= \{\mho^{-1} t_{i,j}-t_{i,j}\mho^{-1}\}\\
    S_3&=\{E_{3,i}(\mho^{-1})^{m_i}-1~:~ E_{3,i}-1\in R_3\}\\
    S_4&=\{(\sum_{k=1}^nt_{i,k}s_{j,k})\mho^{-1}-\delta_{i,j},(\sum_{k=1}^ns_{k,i}t_{k,j})\mho^{-1}-\delta_{i,j}~:~ s_{i,j}\in R_0\}\\
\end{align*}

\begin{theorem}
Let \( A \) be the universal unital \( C^* \)-algebra generated by a set of elements \( T \), subject to the relations \( R_1 \sqcup R_3 \sqcup R_4 \sqcup \{ t_{i,j}^* - s_{i,j}~:~s_{i,j}\in R_0\}\). Then there exists a universal unital \( C^* \)-algebra \( Z \), generated by the set \( T \sqcup \{ \mho^{-1} \} \), satisfying the relations \( S_0 \sqcup R_1  \sqcup S_3 \sqcup S_4\sqcup \{ t_{i,j}^* - \mho^{-1}s_{i,j}~:~s_{i,j}\in R_0\} \).
Note that any one of $R_1$ and $R_3$ may be empty.
Moreover, every irreducible representation of \( Z \) is of the form \( \pi_{\alpha,\lambda} \), for some irreducible representation \( \pi_\alpha \) of \( A \), where
\[
\pi_{\alpha,\lambda}(t_{i,j}) = \lambda \pi_\alpha(t_{i,j}) \quad \text{and} \quad \pi_{\alpha,\lambda}(\mho^{-1}) = \overline{\lambda}^{k+1}.
\]
\end{theorem}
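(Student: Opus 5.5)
The plan has three parts: show that $Z$ exists by bounding its generators, reduce the relations to scalars in an irreducible representation, and then rescale by $\lambda$. Throughout I write $u=\mho^{-1}$. For each $i,j$ I write $P_{i,j}(T)\in Pol_k(T)$ for the homogeneous noncommutative polynomial of degree $k$ that $s_{i,j}$ denotes.

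\emph{Existence of $Z$.} A universal unital $C^*$-algebra exists once every generator has a uniform norm bound in all $*$-representations of the relations. Take a representation $\pi$ of $S_0\sqcup R_1\sqcup S_3\sqcup S_4\sqcup\{t_{i,j}^*-us_{i,j}\}$.
\begin{itemize}
\item By $S_0$, $\pi(u)$ commutes with every $\pi(t_{i,j})$, hence with every $\pi(s_{i,j})=P_{i,j}(\pi(t))$.
\item Substituting $t^*=us$ into $S_4$ gives $\sum_k \pi(t_{i,k})\pi(t_{j,k})^*=\delta_{i,j}$ and $\sum_k \pi(t_{k,i})^*\pi(t_{k,j})=\delta_{i,j}$. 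Thus the matrix $(\pi(t_{i,j}))$ is unitary and $\|\pi(t_{i,j})\|\le 1$.
\item The relation $\sum_k \pi(t_{i,k})\pi(s_{i,k})\pi(u)=1$ shows that $\pi(u)$ is invertible. Its inverse $\sum_k \pi(t_{i,k})P_{i,k}(\pi(t))$ is a degree-$(k+1)$ polynomial in contractions.
\end{itemize}
To bound $\pi(u)$ itself, and to show it is unitary, I take adjoints. Applying $*$ to $t^*=us$ gives $t=s^*u^*$, so $u^*$ also commutes with all $t_{i,j}$ and $t_{i,j}^*$. Combining $t^*=uP(t)$ with unitarity of $(t_{i,j})$ and homogeneity of $P$, I expect to derive $u^*u=1=uu^*$. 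This is the one genuinely non-routine step; I discuss it in the last paragraph. Granting it, all generators are bounded and $Z$ exists by the standard construction (the completion of the free $*$-algebra in the supremum seminorm over representations).

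\emph{Reduction in an irreducible representation.} Let $\pi$ be irreducible. Then $\pi(u)$ and $\pi(u)^*$ commute with every $\pi(t_{i,j})$ and $\pi(t_{i,j})^*$, and of course with $\pi(u)$ and $\pi(u)^*$. So $\pi(u)$ lies in the commutant of $\pi(Z)$, and Schur's lemma gives $\pi(u)=\mu\in\mathbb{C}$ with $|\mu|=1$ by unitarity. Pick $\lambda$ with $|\lambda|=1$ and $\overline{\lambda}^{k+1}=\mu$, and set $\pi_\alpha(t_{i,j})=\overline{\lambda}\pi(t_{i,j})$. I then check each relation of $A$.
\begin{itemize}
\item \emph{Adjoint relation.} Degree-$k$ homogeneity gives
\[
\pi_\alpha(t_{i,j})^*=\lambda\,\mu\, P_{i,j}(\pi(t))=\lambda\mu\lambda^{k}P_{i,j}(\pi_\alpha(t))=P_{i,j}(\pi_\alpha(t)),
\]
using $\mu=\overline{\lambda}^{k+1}$ and $|\lambda|=1$.
\item \emph{$R_1$.} Each element of $R_1$ is homogeneous, so it vanishes after rescaling.
\item \emph{$R_3$.} An element $E_{3,i}$ of degree $m_i(k+1)$ picks up the factor $\overline{\lambda}^{m_i(k+1)}=\mu^{m_i}$, so $S_3$ becomes exactly $R_3$.
\item \emph{$R_4$.} $R_4$ follows from $S_4$ in the same way, since the degrees there are $k+1$.
\end{itemize}
Hence $\pi_\alpha$ is a representation of $A$. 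It is irreducible because it generates the same von Neumann algebra as $\pi$ (the scalar $\mu$ adds nothing). Moreover $\pi=\pi_{\alpha,\lambda}$ with $\pi_{\alpha,\lambda}(u)=\overline{\lambda}^{k+1}$. Conversely, the same degree computation shows that every $\pi_{\alpha,\lambda}$ with $|\lambda|=1$ satisfies the relations of $Z$, and it is irreducible whenever $\pi_\alpha$ is.

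\emph{Main obstacle.} The hard step is proving that $u$ is unitary, since unitarity is not listed among the relations. My first attempt is as follows. From $t^*=uP(t)$ and the two $S_4$ identities one gets $u^{-1}=\sum_k t_{i,k}P_{i,k}(t)=\sum_k t_{i,k}t_{i,k}^*\,u^{-1}$, which is consistent but not yet decisive. So I would instead use $t=P(t)^*u^*$ together with $\sum_k t^*_{k,i}t_{k,j}=\delta_{i,j}$. This yields $1=\sum_k P_{k,i}(t)^*\,u^*u\,P_{k,i}(t)$, and comparing it with the same sum without $u^*u$ (which equals $1$ in $A$-type relations after centrality) should force $u^*u=1$. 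Failing that, I would add $u^*u=uu^*=1$ as an implicit hypothesis; it is satisfied in all the examples of the paper, where $u=\mathscr{Q}_q^{-1}$ with $\mathscr{Q}_q^*=t$.
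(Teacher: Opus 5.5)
Your argument follows the paper's proof. You bound the generators to get $Z$, use Schur's lemma to make the central generator a scalar $\mu$ in an irreducible representation, and rescale the $t_{i,j}$ by a $(k+1)$-th root of $\mu$, checking $R_1$, $R_3$, $R_4$ and the adjoint relation by homogeneity. Your rescaling computations are correct.

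\textbf{The gap is the unitarity of $u=\mho^{-1}$.} The step you single out is a genuine gap, and your sketch for closing it does not work. Substituting $t_{k,i}^*=uP_{k,i}(t)$ and $t_{k,i}=P_{k,i}(t)^*u^*$ into $\sum_k t_{k,i}^*t_{k,i}=1$ actually gives $u(\sum_k P_{k,i}(t)P_{k,i}(t)^*)u^*=1$. This only identifies $\sum_k P_{k,i}(t)P_{k,i}(t)^*$ with $(u^*u)^{-1}$. The identity ``without $u^*u$'' that you want to compare against is a relation of $A$, not of $Z$.

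In fact no argument can succeed, because unitarity of $\mho^{-1}$ does not follow from the listed relations. Take $n=1$, $k=1$, $s_{1,1}=c\,t_{1,1}$ with $c>0$, $c\neq 1$, and $R_1=R_3=\emptyset$ (or $R_3=\{c\,t_{1,1}^2-1\}$, which is redundant).
\begin{itemize}
\item In $A$ the relations force $t_{1,1}$ to be unitary with $t_{1,1}^2=c^{-1}$. This is impossible, so $A$ has no irreducible representations.
\item In $Z$, for every $|\tau|=1$ the assignment $t_{1,1}\mapsto\tau$, $\mho^{-1}\mapsto c^{-1}\overline{\tau}^{2}$ satisfies $S_0$, $S_3$, $S_4$ and $t_{1,1}^*=\mho^{-1}s_{1,1}$. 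This gives one-dimensional, hence irreducible, representations of $Z$ in which $\mho^{-1}$ has modulus $c^{-1}\neq 1$.
\end{itemize}
So the theorem is only true once ``$\mho^{-1}$ is unitary'' is added to the defining relations of $Z$. The paper's proof uses exactly this without stating it: it writes ``the unitary $\zeta(\mho^{-1})$'' and asserts $\|\mho^{-1}\|_u=1$. Your fallback is therefore necessary, not optional. With it, your proof is complete and coincides with the paper's.

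\textbf{What you can prove without the extra hypothesis.} The element $u$ is invertible, with $u^{-1}=\sum_k t_{i,k}s_{i,k}$, a polynomial in the $t$'s. Also $u$ commutes with every $t_{i,j}$ and with $t_{i,j}^*=us_{i,j}$. Hence $u$ commutes with $(u^{-1})^*$, and so with $u^*$. Thus $u$ is normal, and Schur's lemma gives $\pi(u)=\mu\neq 0$. Your ``of course $\pi(u)$ commutes with $\pi(u)^*$'' needs either this argument or the unitarity assumption. Only $|\mu|=1$ genuinely requires the added relation.
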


\begin{proof}Consider any representation $\pi_{\alpha}$ of $A$. Let $\lambda$ be a complex number with \( |\lambda| = 1 \).Define a representation \( \psi: Z \to \mathcal{B}(H) \) by
\[
\psi(t_{i,j}) := \lambda \pi_\alpha(t_{i,j}), \quad \psi(\mho^{-1}) := \overline{\lambda}^{k+1},
\]
where $k$ be a positive integer such that $s_{i,j}\in Pol_k(T)$. Therefore I have $\psi(s_{i,j}) = \lambda^k \pi_\alpha(s_{i,j})= \lambda^k [\pi_\alpha(t_{i,j})]^*$
Then $\psi$ is the admissible representation of the relations \( S_0 \sqcup R_1  \sqcup S_3 \sqcup S_4\sqcup \{ t_{i,j}^* - \mho^{-1}s_{i,j}~:~s_{i,j}\in R_0\} \) such that $\psi(t_{i,j}^*)=\psi(\mho^{-1}s_{i,j})$.

Consider the set $\Gamma=\{\pi:\pi\text{ satisfies } S_0 \sqcup R_1  \sqcup S_3 \sqcup S_4\sqcup \{ t_{i,j}^* - \mho^{-1}s_{i,j}~:~s_{i,j}\in R_0\} \}$.

Define the norm $\Arrowvert p(t_{i,j},\mho^{-1})\Arrowvert_u=\underset{\pi\in\Gamma}{Sup}\Arrowvert p(t_{i,j},\mho^{-1})\Arrowvert$. From $S_4\sqcup \{ t_{i,j}^* - \mho^{-1}s_{i,j}~:~s_{i,j}\in R_0\}$, I have $\sum_kt_{i,k}t_{j,k}^*=\delta_{i,j}$. Then I have $\Arrowvert t_{i,j}\Arrowvert_u\leq 1$ for all $i,j$ and $\Arrowvert \mho^{-1}\Arrowvert_u= 1$. Therefore, the universal $C^*$ algebra exist.

Consider $\zeta$ be any irreducible representation of $Z$. Therefore the unitary $\zeta(\mho^{-1})$ commutes with every generators. Therefore  $\zeta(\mho^{-1})=\mu I$ for some unit norm complex number $\mu$. Let $\lambda$ be a complex number such that $\lambda^{k+1}=\mu$. Define $\wp(t_{i,j})=\lambda \zeta(t_{i,j})$. Then $\wp(s_{i,j})=\lambda^k\zeta(s_{i,j})=\lambda^k\zeta(\mho)\zeta(t_{i,j})^*=\overline{\lambda}\zeta(t_{i,j})^*$. So $\wp$ is also a representation of $A$. Since multiplication by a scalar does not change invariant subspaces, $\wp$ is irreducible.

Therefore, every irreducible representation of \( Z \) is of the prescribed form.
\end{proof}

\begin{remark}
  Therefore, I obtain all irreducible $C^*$-representations of $\mathbb{C}[\widetilde{USp}_q(2n)]$, $\mathbb{C}[\widetilde{O}_q(N)]$, and $\mathbb{C}[\widetilde{SO}_q(2n)]$ from the irreducible representations of $\mathbb{C}[USp_q(2n)]$, $\mathbb{C}[O_q(N)]$, and $\mathbb{C}[SO_q(2n)]$, respectively. For more details on the irreducible representations of $C(SU_q(n))$ and $C(USp_q(2n))$, see \cite{korsoi98}.
\end{remark}

\section*{Acknowledgment}
The author was supported by the Visiting Scientist position at Indian Statistical Institute from July 2025 to October 2025 (No. DO/2025/255) while working on this paper.


\end{document}